%
%
%

\documentclass{svproc}
%
%

\usepackage{bm}
\usepackage{amssymb}
\usepackage{amsfonts}
\usepackage{amsmath}
\usepackage{graphicx}
\usepackage{subfigure}      
\usepackage{longtable,tabularx,booktabs}

\usepackage{url}

\begin{document}
\mainmatter              
\title{Generalized  uncertain theory: concepts and fundamental principles}
\titlerunning{Generalized Uncertain Theory}  
%
\author{Xingguang Chen}
\authorrunning{Xingguang Chen} 
%
%
\institute{Jianghan University, Wuhan 430056, P.R.China,\\
\email{cxg@nju.edu.cn}}

\maketitle              

\begin{abstract}
Although there are many mathematical theories to address uncertain phenomena however, these theories are presented under implicit presupposition that uncertainty of objects is accurately measurable while not considering that the measure of uncertainty itself may be inaccurate. Considering this evident but critical overlook, on the basis of reviewing and commenting several widely used mathematical theories of uncertainty, the fundamental concepts and axiomatic system of generalized uncertain theory (GUT)are proposed for the purpose of describing and analyzing that imprecision of objects has inaccurate attributes. We show that current main stream theories of studying uncertain phenomena, such as probability theory, fuzzy mathematics, etc., are the special cases of generalized uncertain theory. So the generalized uncertain theory could cover previous main stream theories of studying uncertainty. Further research directions and possible application realms are discussed. It may be a beneficial endeavor for enriching and developing current uncertainty mathematical theories.
\keywords{uncertainty mathematics, generalized uncertain theory, probability theory, fuzzy mathematics}
\end{abstract}
\section{Introduction}
The real world is full of uncertainty. There are many events that people can't predict accurately from nature to human society, which has brought great difficulties and challenges to human beings.For example, long-time wether forecasting is very difficult,unexpected geological disasters could make huge casualties, spread of infectious diseases or financial crisis are often unpredictable.Sudden public safety accident, especial the effective prevention of terrorist attacks is becoming a worldwide problem. If we can predict these disasters only before a minor time, the loss of the accident maybe greatly be reduced, while it is very difficult to get the accurate and sufficient information until the accident happen. All these rise a critical question that how to understand the uncertainty properly?
\\\indent
The essential reason of uncertain emerging is because there are many factors affecting the development and change of things around the world, and it is hardly impossible for people to grasp all factors. Moreover, there are also a large number of complex interconnections between components consisting of system , as well as exiting mutual restriction between them. It is difficult for people to fully understand the interaction between these components.
\\\indent
In a sense, certainty is relative while uncertainty is absolute. Uncertainty is an essential attribute of the objective material world. Because of the universal existence of uncertainty in human society and nature, many theoretical tools have been developed to explore it, and the ability of understanding and controlling  uncertainties is constantly enhanced from breadth and depth.
\\\indent
Mathematics could be classed as two categories from the uncertainty perspectives, the first is deterministic mathematics and the other is indeterministic mathematics. The first class include classical pure mathematics (analysis, algebra, geometry, number theory, etc.) as well as applied mathematics (optimization, discrete mathematics, combinatorial mathematics, mathematical physics, biological mathematics, etc.).The second type is mainly used to describe, analyze and deal with various uncertainties. There are some representatives such as fuzzy mathematics \cite{paper_Zadeh_1965}, unascertained mathematics \cite{paper_Wang_1990}, uncertain mathematics \cite{book_Liu_2007}, and the subjective probability theory (the basis of Bayes Statistics,representative scholar is British mathematician Bayes), the objective probability theory (this type probability is based on the frequency, which is originated by the middle of the seventeenth century by French mathematician Pascal, Fermat, Holland mathematician Huygens, Swiss mathematician Bernoulli and so on), grey system theory \cite{paper_Deng_1982}, rough set theory \cite{paper_Pawlak_1982}. Furthermore, all these mentioned uncertain mathematics branches could be classified as two categories with regard to the uncertainty coming from  subjective and objective nature of cognition. The first type involves fuzzy mathematics, unascertained mathematics, uncertain mathematics and subjective probability theory, and the second type involves objective probability theory, grey system theory and rough set theory.
\\\indent All these mentioned theory has a common point that the uncertainty is described by employing a one real number. But in many cases, the uncertainty itself is also uncertain and inaccurate. Using one real numerical scalar to describe the uncertainty maybe far away from the essential characteristics of many situations. To admit that the size of the uncertainty could be accurately known is an ideal approximation of many cases. Indeed,The description, analysis, processing and expression of uncertainty should be considered as two levels, one is the size of the uncertainty can be accurately given, that is, there exists uncertainty for studying objects, while the amount of the uncertainty is deterministic. For example, although it is impossible to predict which surface will appear in advance in random dicing experiments. In ideal case, the probability of each face is 1/6, which is deterministic. The second level is the amount of the uncertainty is also indeterministic. For example, in everyday life, we often say that a thing happens less than 50\%, that means the possibility of something happening is between 0$\sim$50\%; the possibility that something happens is at least 80\%, which indicates that the possibility of something happening is between  80\%$\sim$100\%, and so on. From this point of view that the degree of uncertainty is uncertain, the concept of generalized uncertainty measure is developed, which the uncertainty itself and degree of uncertainty of the studying objects can be described at the same time. On this basis, a generalized uncertain axiom system is proposed, and several basic results are taken out. An example of uncertain decision-making is carried out to demonstrated the effectiveness and rationality of GUT.

\section{Fundamental Concepts}
\begin{definition}[Generalized uncertain measure,GUM]
 Let $\Omega$ be a nonempty set, and let $\Gamma$ be a $\sigma$-algebra over $\Omega$. For every element $e_i$, where $e_i \in \Gamma$, if $G$ satisfy three conditions as below:
 \\
(1)~(Generalized nonnegativity) $G(e_i)=[a_i,b_i]\subseteq[0,1]$;
\\
(2)~(Generalized normalization) $G(\Omega)=1$;
\\
(3)~(Generalized countable additivity ) For arbitrary sequences ${e_1,\cdots,e_n}$,we have $G(e_1\cup \cdots\cup e_n)\leq \sum_{i=1}^{i=n}G(e_i):=[\sum{a_i},\sum{b_i}]\subseteq[0,1]$. Specially, if $\{e_i\}$ is incompatible, i.e., $e_i \cap e_j=\varnothing$, then we have $G(e_1\cup \cdots\cup e_n)= \sum_{i=1}^{i=n}G(e_i):=[\sum{a_i},\sum{b_i}]\subseteq[0,1]$, here the number of set element $n$ could be finite or infinite.
\\
Then $G$ is said to be generalized uncertain measure and triples $\{\Omega,\Gamma,G\}$ are its corresponding generalized uncertain space, here $a_i,b_i \in [0,1]$.
\end{definition}
\indent For event $e$, it's generalized uncertain measure is denoted as $G(e)=[a,b]$, if $a\leq b$,then the left point $a$ indicates the minimum value of event being true, and the right point $b$ indicates the maximum value of event being true. From the definition of  generalized uncertain measure, if and only if $a=b$, the probability of even is deterministic. When $\Omega$ is random sampling set, generalized uncertain measure is equivalent to probability measure. When $\Omega$ is fuzzy set, then generalized uncertain measure degenerates to degree of membership of fuzzy theory. When $\Omega$ is uncertain set, then generalized uncertain measure will be uncertain measure derived by uncertain theory.

\begin{definition}[Inverse interval]
 $[a,b]$ is ordinary interval on real number space, i.e., $a,b \in \bbbr$,then $[b,a]$ is the inverse interval of $[a,b]$.
\end{definition}
\begin{definition}[GUM of complementary set]
Assume the generalized uncertain measure of set $A$ is $G(A)=[a,b]$,then the generalized uncertain measure of complementary set of $A$ is $G(A^c)=[1-a,1-b]$.Sometimes the complementary set of $A$ is denoted as $\overline{A}$.
\end{definition}
\begin{definition}[Arithmetic operation of GUM]
 Let generalized  uncertain measure of two set $e_1,e_2$ be $G(e_1)=[a_1,b_1]\subseteq [0,1]$,$G(e_2)=[a_2,b_2]\subseteq [0,1]$ respectively. Then (1)~$G(e_1)+G(e_2)=[a_1+a_2,b_1+b_2]$; (2)~$G(e_1)-G(e_2)=[a_1-a_2,b_1-b_2]$;(3)~$G(e_1)\times G(e_2)=[a_1\cdot a_2,b_1 \cdot b_2]$;(4)~$G(e_1)\div G(e_2)=[a_1/ a_2,b_1/b_2]$,  where $b_1\neq 0$ and $b_2 \neq 0$.
\end{definition}
\begin{definition}[Generalized  uncertain independent set]
We said set $A$ and $B$ is independent, if $G(AB)=G(A)G(B)$.
\end{definition}
\begin{definition}[The comparison of GUM]
 Assume two interval $I_1=[a_1,b_1]$ and $I_2=[a_2,b_2]$, where $a_1 \leq b_1$ and $a_2 \leq b_2$. Then there exist three relations between $I_1$ and $I_2$. i.e., (1)~separation relationship (R1): if $b_1<a_2$, denoted as $I_1 \ll I_2$; (2)~interlaced relationship (R2): if $a_1 \leq a_2 \leq b_1 \leq b_2$, denoted as $I_1 \asymp I_2$; (3)~inclusion relationship (R3): if $a_1 > a_2$ and $b_1 < b_2$, denoted as $I_1 \subseteq I_2$.
 \\ Let generalized  uncertain measure of two set $e_1,e_2$ be $G(e_1)=[a_1,b_1]:=I_1\subseteq [0,1]$,$G(e_2)=[a_2,b_2]:=I_2\subseteq [0,1]$ respectively.(i)~If and only if $I_1$ and $I_2$ satisfies relation R1, it is said that generalized  uncertain measure of set $e_1$ is strongly smaller than $e_2$, denoted as $G(e_1)<G(e_2)$; (ii)~If and only if $I_1$ and $I_2$ satisfies relation R2,it is said that generalized  uncertain measure of set $e_1$ is weakly smaller than $e_2$, denoted as $G(e_1) \leq G(e_2)$; and (iii)~if and only if $I_1$ and $I_2$ satisfies relation R3, it is said that generalized  uncertain measure of set $e_1$ is  partly smaller than $e_2$, denoted as $G(e_1) \preceq G(e_2)$.On the contrary, it is said that strongly greater,  weakly greater, and partly greater, which are denoted as $>$, $\geq$ and $\succeq$ respectively.
\end{definition}
\begin{definition}[The uncertainty degree of GUM]
 It is said that the length of interval of GUM to be taken as the uncertainty degree of GUM of set $e$, which is denoted as $\text{gud}(e)=[b-a]$, where $G(e)=[a,b]$ is GUM of set $e$.
\end{definition}
\begin{definition}[Low order uncertain system and high order uncertain system]
For two uncertain system $S_1$ and $S_2$, if GUM of $S_2$ is greater than GUM of $S_1$, i.e., $G(S_2)>G(S_1)$,then we said $S_2$ is  a higher order uncertain system than $S_1$.
\end{definition}
\begin{definition}[Generalized  uncertain function and  variable]
Let triples $(\Omega, \Gamma, G)$ be generalized  uncertain space, if for arbitrary element $\xi \in \Gamma$, there exists a function family $\textbf{f}(\xi)=\{f(\xi):f_1(\xi)\leq f(\xi) \leq f_2(\xi)\}$,where $f(\xi)$ is ordinary function which is defined on $\Gamma$, and value domain is $[0,1]$, i.e., $f(\cdot): \Gamma \mapsto [0,1]$, then function family $\textbf{f}(\xi)$ is defined as generalized  uncertain function (GUF)and $f(\cdot)$ is core function of  generalized  uncertain function. $f_1(\cdot)$ and $f_2(\cdot)$ are taken account as lower core function and upper core function of $\textbf{f}(\cdot)$ respectively, $\xi$ is said to be generalized  uncertain variable.
\end{definition}
\begin{definition}[$\delta$ neighbour between generalized uncertain variables]
 For non-negative real number $\delta \geq 0$, there are two generalized uncertain variables $\xi_1$ and $\xi_2$. Assume their GUMs are $G(\xi_1)=[a_1,b_1]$ and $G(\xi_2)=[a_2,b_2]$ respectively. We assert generalized uncertain variable $\xi_1$ and $\xi_2$ are $\delta$ neighbour each other, if $\xi_1$ and $\xi_2$ satisfy: $\mid a_1-a_2 \mid\leq \delta$ and $\mid b_1-b_2\mid\leq \delta$.
\end{definition}
\begin{definition}[Generalized  uncertain distribution function]
The generalized  uncertain distribution function are defined as $G_d(x)=G(\xi\leq x)$, where $G(\cdot)$ is generalized  uncertain function.
\end{definition}
\begin{definition}[Generalized uncertain density function, GUDF]
 We call function family $\textbf{f}_{\xi}(s)$ is generalized uncertain density function, if  function family $\textbf{f}_{\xi}(s)$ satisfy that $G_d(x)=G(\xi\leq x)=\int_{-\infty}^{x}\textbf{f}_{\xi}(s)ds:=[\min\{\int_{-\infty}^{x}f_{\xi}(s)ds\},\max\{\int_{-\infty}^{x}f_{\xi}(s)ds\}]\subseteq [0,1]$.
\end{definition}
\begin{definition}[Generalized uncertain mass function, GUMF]
 For discrete  generalized uncertain variables $\xi\in\{x:x=x_1,\cdots,x_n\}$, we call $G_M(x)=G(\xi=x)$ is generalized uncertain mass function, the  distribution law of discrete  generalized uncertain variables is denoted as $\{G(\xi=x_i)\}=\{[a_i,b_i]\}$.
\end{definition}
\begin{definition}[Generalized uncertain expectation, GUE]
(1)GUE of discrete generalized uncertain variables: Assume the distribution law of discrete  generalized uncertain variable $\xi$ is $\{G(\xi=x_i)\}=\{[a_i,b_i]\}$, where $i=1,\cdots, n$, then the GUE is $GUE(\xi)=[\sum_{i=1}^{n}x_ia_i, \sum_{i=1}^{n}x_ib_i]$;
\\ (2)GUE of continue generalized uncertain variables: $GUE(\xi)=\int_{-\infty}^{\infty}x\cdot \textbf{f}_{\xi}(x)dx:=[\min\{\int_{-\infty}^{\infty}x\cdot f_{\xi}(x)dx\}, \max\{\int_{-\infty}^{\infty}x\cdot f_{\xi}(x)dx\}]$.
\end{definition}
\begin{definition}[Generalized covariance of generalized uncertain variables]
Assume $(\xi_1, \xi_2)$ is two dimension generalized uncertain variables, if  $GUE\{(\xi_1-GUE(\xi_1))(\xi_2-GUE(\xi_2))\}$ exists, then we call it is the generalized covariance of generalized uncertain variables $\xi_1$ and $\xi_2$.
\end{definition}
\begin{definition}[Generalized uncertain process]
 $(\Omega, \Gamma, G)$ is a generalized uncertain space, $T$ is parameter set, where $T\subset \bbbr$. For every $t \in T$, there exists one generalized uncertain variable $\xi(\omega, t)$, then set $\{\xi(\omega, t)\}$ are taken as generalized uncertain process defined on $(\Omega, \Gamma, G)$. It could be denoted as $\{\xi(\omega,t); \omega \in \Omega, t\in T\}$ or $\{\xi(t);t\in T\}$, abbreviated as $\{\xi(t)\}$. All possible value space $S$ of generalized uncertain variable $\xi$ taken on parameter set $T$ is said to be status of generalized uncertain process. Specially, if the parameter set $T$ is discrete, then generalized uncertain process $\{\xi(t)\}$ is said to be as generalized uncertain  sequences.
\end{definition}
\begin{definition}[Generalized uncertain limit, variation, derivative, and integral]
Assume generalized  uncertain function $\textbf{f}(\xi)=\{f(\xi):f_1(\xi)\leq f(\xi) \leq f_2(\xi)\}$,
\\(i) Generalized uncertain limit:
$\lim \limits_{\xi \to {\xi_0}}\textbf{f}(\xi)=[\min\{\lim \limits_{\xi \to {\xi_0}}f(\xi)\}, \max\{\lim \limits_{\xi \to {\xi_0}}f(\xi)\}]$,where  $f(\xi)$ is  core function of GUF $\textbf{f}(\xi)$;
\\(ii) Generalized uncertain derivative: $\textbf{f}_{x=x_0}^{'}=\lim \limits_{x \to {x_0}}\frac{\Delta \textbf{f}}{\Delta {x}}=[\min\{\lim \limits_{x\rightarrow {x_0}}\frac{\Delta {f}}{\Delta {x}}\},\max\{\lim \limits_{x\rightarrow {x_0}}\frac{\Delta {f}}{\Delta {x}}\}]$;
\\(iii) Generalized uncertain variation: $\delta \textbf{f}=\textbf{f}(x+\Delta x)-\textbf{f}(x)=[\min\{f(x+\Delta x)-f(x)\}, \max\{f(x+\Delta x)-f(x)\}]$;
\\(iv) Generalized uncertain integral: $\int_a^b \textbf{f}(x)dx =[\min\{\int_a^b {f}(x)dx\},\max\{\int_a^b {f}(x)dx\}].$.
\end{definition}

\section{Corollaries, Propositions and Algorithms}
\noindent
\begin{corollary}[Conditional generalized uncertain measure] For set $A$ and $B$,we have $G(A\vert B)=\dfrac{G(AB)}{G(B)}$.
\end{corollary}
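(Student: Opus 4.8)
The plan is to reproduce, inside the generalized uncertain framework, the classical derivation of conditional probability, with each scalar replaced throughout by an interval-valued GUM and the ordinary quotient replaced by the division operation from the arithmetic of GUM. First I would interpret conditioning on $B$ as passing to $B$ as a new universe: the conditional measure $G(\cdot\mid B)$ should itself be a generalized uncertain measure, now carried by the trace $\sigma$-algebra $\Gamma_B=\{e\cap B:e\in\Gamma\}$ and built from the ambient $G$ evaluated on events lying inside $B$.

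The key steps, in order, are as follows. (i) Observe that once $B$ is known to occur, the only part of an event $A$ that is relevant is $A\cap B=AB$, so $G(\cdot\mid B)$ must be determined by the values $G(AB)$. (ii) Using generalized countable additivity, argue that on disjoint events inside $B$ the assignment $A\mapsto G(AB)$ is additive; hence the conditional measure must be a fixed multiple (in the sense of the GUM arithmetic) of $A\mapsto G(A\cap B)$. (iii) Pin down that multiplier by the normalization requirement $G(B\mid B)=1$; since $B\cap B=B$ gives $G(B\cap B)=G(B)$, the multiplier is forced to be the reciprocal of the interval $G(B)$, i.e.\ division by $G(B)$ as in part (4) of the arithmetic operation of GUM. (iv) Combining (ii) and (iii) yields $G(A\mid B)=G(AB)\div G(B)=G(AB)/G(B)$, which is the asserted identity.

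The hard part will be the well-definedness of step (iii). The division rule $[a_1,b_1]\div[a_2,b_2]=[a_1/a_2,\,b_1/b_2]$ does not automatically return a legitimate GUM: one must verify that the resulting endpoints still satisfy $a_1/a_2\le b_1/b_2$ and that the quotient interval lands in $[0,1]$, as required by generalized nonnegativity and normalization. I would therefore establish the monotonicity relations $a_{AB}\le a_B$ and $b_{AB}\le b_B$, which follow from the disjoint decomposition $B=AB\cup(A^{c}B)$ together with the additivity clause, and invoke the standing hypothesis $b_B\neq 0$ from the arithmetic-operation definition. Even so, $a_{AB}\le b_{AB}$ and $a_B\le b_B$ do not by themselves guarantee $a_{AB}/a_B\le b_{AB}/b_B$ (the cross-product inequality $a_{AB}b_B\le b_{AB}a_B$ can fail), so the quotient may collapse into an inverse interval; handling this degeneracy, or restricting to the regime in which it cannot occur, is the genuine content that must be supplied to make the statement a true consequence of the axioms rather than a disguised new definition.
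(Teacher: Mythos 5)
Your route is genuinely different from the paper's. The paper justifies the formula by a frequency-count heuristic: it posits that in $n$ trials the numbers of occurrences of $A$, $B$ and $AB$ are themselves intervals $[\underline{n}_A,\overline{n}_A]$, $[\underline{n}_B,\overline{n}_B]$, $[\underline{n}_{AB},\overline{n}_{AB}]$, declares that $G(A\mid B)$ \emph{should be} the interval ratio $[\underline{n}_{AB},\overline{n}_{AB}]\,/\,[\underline{n}_B,\overline{n}_B]$, and then divides numerator and denominator by $n$ to read off $G(AB)/G(B)$. You instead characterize $G(\cdot\mid B)$ axiomatically: restrict to the trace $\sigma$-algebra $\Gamma_B$, use additivity to see the conditional measure is a fixed multiple of $A\mapsto G(AB)$, and fix the multiplier by $G(B\mid B)=1$. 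Your version stays inside the axiom system and makes explicit what the paper leaves implicit, namely that this ``corollary'' is really a definition being motivated rather than a theorem being deduced; the paper's version buys an intuitive empirical reading at the price of an unformalized notion of interval-valued counts. Both arrive at the same identity.

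The well-definedness issue you flag at step (iii) is genuine, and the paper's proof does not address it either. Writing $G(AB)=[a_{AB},b_{AB}]$ and $G(B)=[a_B,b_B]$, the decomposition $B=AB\cup(A^{c}B)$ and additivity give $a_{AB}\le a_B$ and $b_{AB}\le b_B$, so each quotient endpoint lies in $[0,1]$; but the cross inequality $a_{AB}\,b_B\le b_{AB}\,a_B$ can fail (for instance $G(AB)=[0.1,0.2]$ and $G(A^{c}B)=[0,0.8]$ give the quotient $[1,0.2]$), so the result can be an inverse interval and hence not a GUM in the sense of Definition~1. Neither your argument nor the paper's closes this gap; yours at least names it, and either a restriction on the admissible pairs $(A,B)$ or a convention for reordering inverse intervals would have to be supplied. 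Note also a small mismatch with the paper's own arithmetic: the division rule in Definition~4 is stated under the hypotheses $b_1\neq 0$ and $b_2\neq 0$, although it is $a_2$ that appears as a denominator, so the corollary silently requires $a_B>0$ as well.
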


\begin{proof} Assume the total number of experiment is $n$. The number of event A occurs is $[\underline{n}_A, \overline{n}_A]$, which indicates the minimum number and maximum number of event A happening is $\underline{n}_A$ and  $\overline{n}_A$ respectively.
It is an uncertain range from $\underline{n}_A$ to $\overline{n}_A$. Similarly,the number of event B occurs is $[\underline{n}_B, \overline{n}_B]$, and the number of event A and event B occur synchronously is $[\underline{n}_{AB}, \overline{n}_{AB}]$. Note, there is only the total number of experiment is determined. The conditional generalized uncertain measure $G(A \vert B)$ should be the number of  event A and event B occur synchronously is divided by   the number of  event B occurs, i.e., $G(A \vert B)=\dfrac{[\underline{n}_{AB}, \overline{n}_{AB}]}{[\underline{n}_B, \overline{n}_B]}=\dfrac{[\underline{n}_{AB}, \overline{n}_{AB}]/n}{[\underline{n}_B, \overline{n}_B]/n}=\dfrac{G(AB)}{G(B)}$.
\qed
\end{proof}

\begin{corollary} If the generalized uncertain degree $gud(A)=0$,then the generalized uncertain measure of A is a real number.
\end{corollary}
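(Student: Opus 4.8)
The plan is to unwind the two relevant definitions and observe that the conclusion follows at once. First I would recall that, by the definition of the uncertainty degree of GUM, if $G(A)=[a,b]$ then $\text{gud}(A)=b-a$. Feeding in the hypothesis $\text{gud}(A)=0$ gives $b-a=0$, that is $a=b$.

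Next I would substitute this equality back into the generalized uncertain measure itself. Since $G(A)=[a,b]$ and $a=b$, the interval degenerates to $[a,a]$, i.e. a single point. The final step is to identify the degenerate interval $[a,a]$ with the real number $a\in[0,1]$; this is precisely the content of the remark following Definition 1, that \emph{if and only if} $a=b$ the measure is deterministic, so the identification is legitimate inside the framework. The chain $\text{gud}(A)=0 \Rightarrow a=b \Rightarrow G(A)=[a,a]$ then yields a real-valued measure.

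I expect no genuine obstacle here, since the statement is an immediate corollary of the definitions rather than a result requiring construction or estimation. The only point that deserves a sentence of care is the passage from a length-zero closed interval to a scalar: strictly speaking $[a,a]=\{a\}$ is a set, not a number, so I would explicitly record the convention (already implicit in the paper's treatment of deterministic measures) that a degenerate interval is regarded as the single real number it contains. With that convention stated, the argument is complete, and one may additionally note the converse direction holds as well, so that $\text{gud}(A)=0$ characterizes exactly the deterministic (real-valued) measures.
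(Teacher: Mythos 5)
Your argument is correct and follows exactly the same route as the paper's own proof: from $\text{gud}(A)=b-a=0$ conclude $a=b$, hence $G(A)=[a,a]=a$. Your extra remark making explicit the convention identifying the degenerate interval $[a,a]$ with the scalar $a$ is a slight refinement, but not a different approach.
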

\begin{proof} From the definition of GUM, $gud(A)=l(G(A))=b-a=0\Rightarrow a=b$, then we have $G(A)=[a,b]=[a,a]=a$, which means the assertion is true. \qed
\end{proof}

\begin{corollary}[Monotonicity of generalized uncertain measure] For arbitrary two generalized uncertain measurable sets $A_1$ and $A_2$, if $A_1\subseteq A_2$,then the GUM of these two sets maybe two situations, ~(i)$G(A_1)<G(A_2)$, ~(ii)$G(A_1)\leq G(A_2)$, while the third situation, i.e., partly smaller won't be came into existence.
\end{corollary}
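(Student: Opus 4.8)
The plan is to extract endpoint monotonicity directly from the additivity and nonnegativity clauses of the generalized uncertain measure, and then read off which of the three comparison relations R1--R3 can occur. First I would decompose $A_2$ as the disjoint union $A_2 = A_1 \cup (A_2 \setminus A_1)$. Since $\Gamma$ is a $\sigma$-algebra, $A_2 \setminus A_1 = A_2 \cap A_1^c$ again lies in $\Gamma$, and the two pieces are incompatible because $A_1 \cap (A_2 \setminus A_1) = \varnothing$; the hypothesis $A_1 \subseteq A_2$ is exactly what makes their union equal to $A_2$. Writing $G(A_1) = [a_1, b_1]$, $G(A_2) = [a_2, b_2]$ and $G(A_2 \setminus A_1) = [c, d]$, the additivity clause for incompatible sets together with the interval-addition rule for GUM gives $G(A_2) = G(A_1) + G(A_2 \setminus A_1) = [a_1 + c,\, b_1 + d]$, so that $a_2 = a_1 + c$ and $b_2 = b_1 + d$.

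Next I would invoke generalized nonnegativity, which forces $[c,d] \subseteq [0,1]$ and in particular $c \geq 0$ and $d \geq 0$. Substituting yields the two endpoint inequalities $a_1 \leq a_2$ and $b_1 \leq b_2$ simultaneously. These inequalities are the whole engine of the argument: the left endpoint and the right endpoint of $G(A_1)$ each sit no higher than the corresponding endpoint of $G(A_2)$.

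With $a_1 \leq a_2$ and $b_1 \leq b_2$ in hand, the classification is immediate. The inclusion relation R3 requires the strict inequality $a_1 > a_2$, which directly contradicts $a_1 \leq a_2$; hence the partly-smaller case $G(A_1) \preceq G(A_2)$ can never arise, which is precisely the final assertion of the corollary. It then remains to confirm that the surviving possibilities are exactly R1 and R2, and I would do this by splitting on the position of $b_1$ relative to $a_2$. If $b_1 < a_2$ we are in the separation relation R1, giving $G(A_1) < G(A_2)$, i.e. situation (i). If instead $b_1 \geq a_2$, then combining with $a_1 \leq a_2$, $b_1 \leq b_2$ and the standing interval conditions $a_1 \leq b_1$, $a_2 \leq b_2$ produces the chain $a_1 \leq a_2 \leq b_1 \leq b_2$, which is the interlaced relation R2 and gives $G(A_1) \leq G(A_2)$, i.e. situation (ii). These two cases are exhaustive and mutually exclusive, so exactly one of (i) and (ii) holds.

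The routine part is the algebra of interval addition; the one point demanding care is the case split, where I must verify that $a_1 \leq a_2$ and $b_1 \leq b_2$ genuinely cover R1 and R2 with no gap while never leaving room for R3. The main (and only) obstacle is therefore bookkeeping against the comparison definitions rather than any deep estimate: one has to check that the condition $b_1 \geq a_2$, fed together with the endpoint monotonicity, reproduces the full four-term inequality chain defining R2, and that the boundary case $b_1 = a_2$ is consistently assigned to R2 (where $a_2 \leq b_1$ holds with equality) rather than to the strictly separated R1.
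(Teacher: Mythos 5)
Your proposal is correct and follows essentially the same route as the paper: decompose $A_2$ into the disjoint union of $A_1$ and $A_2\setminus A_1$, apply the additivity clause to get $a_2=a_1+a_3$ and $b_2=b_1+b_3$ with $a_3,b_3\ge 0$, and then split on $b_1<a_2$ versus $b_1\ge a_2$ to land in R1 or R2 while noting that R3's requirement $a_1>a_2$ is impossible. Your write-up is in fact slightly more explicit than the paper's at the point where R3 is excluded, which the paper dismisses as obvious.
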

\begin{proof} Assume $G(A_1)=[a_1,b_1]$,$G(A_2)=[a_2,b_2]$,$A_3:=A_2-A_1=A_2/A_1$,$G(A_3)=[a_3,b_3]$, then $A_1\cap A_3=\varnothing$. We have $G(A_2)=G(A_1 \cup A_3)=G(A_1)+G(A_3)=[a_1+a_3, b_1+b_3]$,then because $a_1 \in [0,1]$,$b_1 \in [0,1]$, $a_3 \in [0,1]$,$b_3 \in [0,1]$, and it is obvious that $0\leq a_1 \leq a_1+a_3 \leq 1$ and $0\leq b_1 \leq b_1+b_3 \leq 1$, (i) if $b_1 < a_1+a_3$, then $[a_1,b_1]$ and $[a_1+a_3, b_1+b_3]$ satisfy separation relationship (R1),which means $G(A_1)<G(A_2)$; (ii) if $b_1 \geq a_1+a_3$,then $[a_1,b_1]$ and $[a_1+a_3, b_1+b_3]$  satisfy interlaced relationship (R2), which means $G(A_1)\leq G(A_2)$. It is obvious that the third situation won't occur. The proof is completed. \qed
\end{proof}
\begin{corollary}[The additional formula of GUM]   For arbitrary two generalized uncertain measurable sets $A_1$ and $A_2$, we have $G(A_1 \cup A_2)=G(A_1)+G(A_2)-G(A_1A_2)$.
\end{corollary}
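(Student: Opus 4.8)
The plan is to reduce the identity to the ordinary inclusion--exclusion principle applied separately to the two endpoints of each interval, exploiting the fact that both the countable additivity of Definition~1 and the arithmetic of Definition~4 act componentwise on the endpoints. First I would carve the union into three pairwise disjoint, $\Gamma$-measurable blocks
$$B_1=A_1\cap A_2^{c},\qquad B_2=A_1\cap A_2,\qquad B_3=A_2\cap A_1^{c},$$
each of which lies in $\Gamma$ because a $\sigma$-algebra is closed under complement and intersection. These satisfy $A_1=B_1\cup B_2$, $A_2=B_2\cup B_3$, $A_1\cup A_2=B_1\cup B_2\cup B_3$ and $A_1A_2=B_2$, with all listed unions disjoint. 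This is the same disjoint-difference device already used in the proof of the monotonicity corollary, now applied symmetrically to both sets.

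Next I would apply the equality case of Definition~1(3) (finite additivity on incompatible sets) to each of these disjoint unions, obtaining
$$G(A_1\cup A_2)=G(B_1)+G(B_2)+G(B_3),\quad G(A_1)=G(B_1)+G(B_2),\quad G(A_2)=G(B_2)+G(B_3),$$
together with $G(A_1A_2)=G(B_2)$. Substituting the last three expressions into the right-hand side of the claimed formula and simplifying with the addition and subtraction rules of Definition~4 gives
$$G(A_1)+G(A_2)-G(A_1A_2)=G(B_1)+G(B_2)+G(B_3)=G(A_1\cup A_2),$$
which is exactly the assertion.

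The only point that requires genuine care --- and the main obstacle to a naive reading --- is that the cancellation $G(B_2)+G(B_2)-G(B_2)=G(B_2)$ must be justified inside interval algebra, where subtraction is \emph{not} the usual interval difference. Here I would invoke Definition~4 explicitly: both addition and subtraction are defined componentwise on the endpoints, so writing $G(B_j)=[\alpha_j,\beta_j]$ the computation splits into the two scalar identities $\alpha_1+2\alpha_2+\alpha_3-\alpha_2=\alpha_1+\alpha_2+\alpha_3$ and $\beta_1+2\beta_2+\beta_3-\beta_2=\beta_1+\beta_2+\beta_3$, each of which is trivially true. It is worth remarking that this clean cancellation depends essentially on the paper's componentwise convention for subtraction; with the standard interval difference $[\alpha_1,\beta_1]-[\alpha_2,\beta_2]=[\alpha_1-\beta_2,\beta_1-\alpha_2]$ the endpoints would not cancel and the formula would fail. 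The proof should therefore foreground that every arithmetic symbol is understood in the sense fixed by Definition~4, and that the measurability of $B_1,B_2,B_3$ guarantees each term on which additivity is invoked is well defined.
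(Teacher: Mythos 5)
Your proof is correct and follows essentially the same route as the paper's: disjointify, apply the equality case of additivity from Definition~1(3), and cancel using the componentwise interval arithmetic of Definition~4. The paper uses the asymmetric two-piece decomposition $A_1\cup A_2=A_1\cup(A_2-A_1A_2)$ rather than your symmetric three-block one, but the underlying argument --- including the reliance on componentwise subtraction that you rightly flag as the point needing care --- is the same.
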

\begin{proof} $A_1A_2 \subseteq A_2 \Rightarrow A_2=(A_2-A_1A_2)\cup(A_1A_2)$, and $(A_2-A_1A_2)\cap (A_1A_2)=\varnothing$.So, $G(A_2)=G(A_2-A_1A_2)+G(A_1A_2)$, then $G(A_2-A_1A_2)=G(A_2)-G(A_1 A_2)$. Furthermore, $A_1 \cup A_2=A_1 \cup (A_2-A_1A_2)$, and $A_1 \cap (A_2-A_1A_2)=\varnothing$, so we have $G(A_1\cup A_2)=G(A_1)+G(A_2-A_1A_2)=G(A_1)+G(A_2)-G(A_1 A_2)$.  \qed
\end{proof}
\begin{proposition}
 For generalized uncertain set sequences $A=\{A_i;i=1\cdots n\}$, the GUM for every $A_i$ is $G(A_i)=[a_i,b_i]$, and  arbitrary two sets are incompatible, i.e., $A_i\cap A_j= \varnothing $. If the GUD of every $A_i$ equals 0, i.e., $\text{gud}(A_i)=0$,then we have $G(A)=\sum_{i=1}^{i=n}a_i$.
\end{proposition}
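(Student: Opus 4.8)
The plan is to derive the claim as an almost immediate consequence of the generalized countable additivity axiom (condition (3) of Definition 1), once the hypothesis on the uncertainty degree has been decoded. First I would read off what $\text{gud}(A_i)=0$ means: by the definition of the uncertainty degree, $\text{gud}(A_i)=b_i-a_i$, so the hypothesis forces $a_i=b_i$ for every $i$. By Corollary 2 this already tells me that each individual $G(A_i)$ is the single real number $a_i$, which is the local version of what I want to establish for the union.

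Next, since the family $\{A_i\}$ is incompatible, meaning $A_i\cap A_j=\varnothing$ for $i\neq j$, I would invoke the equality case of generalized countable additivity. Writing $A=A_1\cup\cdots\cup A_n$ for the union, this yields $G(A)=\sum_{i=1}^{n}G(A_i)=[\sum_{i=1}^{n}a_i,\sum_{i=1}^{n}b_i]$. Substituting the collapse $b_i=a_i$ from the first step turns the right-hand endpoint into $\sum_{i=1}^{n}a_i$, so the whole interval degenerates to $[\sum a_i,\sum a_i]$, which by Corollary 2 is precisely the real number $\sum_{i=1}^{n}a_i$.

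The argument is genuinely short, so I do not expect a substantive mathematical obstacle; the only point requiring care is interpretive. The statement writes $A=\{A_i;i=1\cdots n\}$, which literally reads as the family of sets, yet $G$ is defined on elements of the $\sigma$-algebra $\Gamma$, not on families of them. I would therefore fix at the outset the reading that $A$ denotes the union $\bigcup_{i=1}^{n}A_i\in\Gamma$, since that is the only interpretation under which $G(A)$ is defined and under which the additivity axiom applies. With that convention in place, the proof is exactly the one-line chain described above, and the essential content is simply that zero uncertainty degree makes the endpoints coincide so that additivity of intervals reduces to ordinary summation of reals.
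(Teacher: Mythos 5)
Your proposal is correct and follows essentially the same route as the paper's own proof: both reduce $\text{gud}(A_i)=0$ to $a_i=b_i$ via Corollary 2 and then apply the equality case of condition (3) of Definition 1 to the disjoint union, with the paper likewise implicitly reading $A$ as $A_1\cup\cdots\cup A_n$. Your explicit remark on that interpretive point is a minor clarification the paper glosses over, but the substance is identical.
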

\begin{proof} From Corollary 2 we have $G(A_i)=a_i \in \bbbr$, furthermore, considering condition 3 of definition 1, we have $G(A)=G(A_1 \cup A_2 \cdots \cup A_n)=\sum_{i=1}^{i=n}G(A_i)=\sum_{i=1}^{i=n}a_i$.
\qed
\end{proof}
\begin{proposition}[The correlation between probability and GUM]  In independent repeated experiment, let $n$ be the number of experiment performed. $P(A)$ is the probability that the event occurs in one stochastic experiment. The GUM of $n$th stochastic experiment is recorded as $G(A)_n=[a_n,b_n]$,and $\{G(A)_n\}=\{[a_n,b_n]\}$ is consist of  a closed nested intervals. Then we have $P(A)=\lim \limits_{n\rightarrow \infty}G(A)_n$, i.e., the probability is the limitation of generalized uncertain measure under this specific conditions.
\end{proposition}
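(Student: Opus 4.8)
The plan is to combine two classical ingredients: the nested closed interval theorem (Cantor's intersection principle), which forces the sequence $\{[a_n,b_n]\}$ to collapse onto a single point, and the law of large numbers, which identifies that point with $P(A)$. First I would make explicit the frequentist reading of the GUM already used in the proof of Corollary 1: after $n$ independent trials the measure is recorded as a normalized count interval $G(A)_n=[\underline{n}_A,\overline{n}_A]/n=:[a_n,b_n]$, whose endpoints bracket the running relative frequency of $A$. The hypothesis that $\{[a_n,b_n]\}$ is a closed nested family means $[a_{n+1},b_{n+1}]\subseteq[a_n,b_n]$ for every $n$, i.e. the left endpoints $a_n$ are non-decreasing and the right endpoints $b_n$ are non-increasing, each confined to $[0,1]$.

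By the monotone convergence theorem for bounded real sequences I then obtain $a_n\uparrow a^{*}$ and $b_n\downarrow b^{*}$ with $0\le a^{*}\le b^{*}\le 1$, so that $\bigcap_{n}[a_n,b_n]=[a^{*},b^{*}]$ is nonempty. The crucial step is to show that this limiting interval is degenerate, i.e. that the uncertainty degree $\text{gud}(A)_n=b_n-a_n\to 0$. Here I would invoke the law of large numbers (Bernoulli's theorem): in independent repeated trials the relative frequency of $A$ converges to $P(A)$, and $P(A)$ is bracketed by each nested interval. Since every $[a_n,b_n]$ contains $P(A)$ while its endpoints are squeezed toward the stabilizing frequency, we get $a^{*}=b^{*}=P(A)$; equivalently the width vanishes and the common limit is $P(A)$. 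The nested interval theorem then yields $\bigcap_{n}[a_n,b_n]=\{P(A)\}$.

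Finally, because the limiting interval has zero width, Corollary 2 permits rewriting the degenerate interval as a real number, so that
\[
\lim_{n\to\infty}G(A)_n=[a^{*},b^{*}]=[P(A),P(A)]=P(A),
\]
which is exactly the assertion. The main obstacle is the middle step: rigorously establishing that the width vanishes and that the common limit equals $P(A)$ rather than some other point of $[a^{*},b^{*}]$. This is precisely where the informal frequentist interpretation of the GUM must be pinned down and the law of large numbers supplied. One must also decide whether the nestedness and the convergence are read deterministically, as the idealized modeling assumption the statement seems to intend, or only in the probabilistic sense (in probability or almost surely), in which case $P(A)=\lim_{n}G(A)_n$ should be understood as an almost-sure limit.
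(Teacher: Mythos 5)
Your proposal follows essentially the same route as the paper: both rest on the nested closed interval theorem to collapse $\{[a_n,b_n]\}$ onto a single point and then identify that point with $P(A)$ via the frequentist reading of the measure. The only difference is one of care: the paper simply asserts that the theorem yields a unique $\xi$ with $\lim a_n=\lim b_n=\xi$ and sets $P(A)=\xi$ ``from the definition of probability,'' whereas you correctly flag that nestedness alone gives only a nonempty intersection $[a^{*},b^{*}]$, so that the vanishing of the width $b_n-a_n$ and the identification of the common limit with $P(A)$ must be supplied separately (by the law of large numbers, or by adopting the convention that a ``closed nested interval sequence'' includes $b_n-a_n\to 0$).
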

\begin{proof} From the closed nested interval theorem, there exists only one real number $\xi \in \bbbr$, which satisfy $\lim \limits_{n\rightarrow \infty} a_n =\lim \limits_{n\rightarrow \infty} b_n =\xi$, from the definition of probability, we have $P(A)=\lim \limits_{n\rightarrow \infty} a_n=\xi=[\xi,\xi]=[\lim \limits_{n\rightarrow \infty} a_n, \lim \limits_{n\rightarrow \infty} b_n]=\lim \limits_{n\rightarrow \infty}[a_n,b_n]=\lim \limits_{n\rightarrow \infty} G(A)_n$. \qed
\end{proof}
\begin{proposition}[The correlation between degree of membership and GUM] Assume $\mu(A)$ is the degree of membership function of set $A$ under fuzzy theory, $G(A)=[a,b]$ is the GUM of set $A$. Then $\mu(A)$ is a special case of GUM.
\end{proposition}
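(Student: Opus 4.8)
The plan is to mirror the argument used in the preceding proposition on probability, but to replace the nested-interval limit by the degenerate-interval observation recorded in Corollary 2. The essential point is that a fuzzy membership function is, by definition, a map $\mu(\cdot):\Gamma\mapsto[0,1]$ that assigns to each set a single real number in $[0,1]$, whereas a generalized uncertain measure assigns the interval $G(A)=[a,b]\subseteq[0,1]$. Hence the natural way to exhibit $\mu(A)$ as a special case of $G(A)$ is to identify the subclass of generalized uncertain measures whose uncertainty degree vanishes, and to show that on this subclass $G$ is nothing but a $[0,1]$-valued set function of the same type as $\mu$.

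First I would invoke the definition of the uncertainty degree together with Corollary 2: whenever $\mathrm{gud}(A)=b-a=0$ we have $a=b$, so that $G(A)=[a,b]=[a,a]=a$ collapses to a single real number lying in $[0,1]$. This is precisely the codomain of a fuzzy membership grade, so the restriction of $A\mapsto G(A)$ to the zero-uncertainty-degree subclass is a scalar set function of exactly the kind carried by $\mu$.

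Next I would set up the correspondence $\mu(A):=G(A)=a$ on this subclass and verify that the defining conditions of Definition 1 specialize correctly. Generalized nonnegativity forces $a\in[0,1]$, which is exactly the range requirement on a membership grade, and generalized normalization fixes the correct value on the whole universe $\Omega$. I would then observe that the GUM arithmetic of Definition 4 and the GUM comparison of Definition 6 both degenerate, when $a=b$, to ordinary arithmetic and the ordinary order on $\mathbb{R}$, so that the structure carried by $\mu$ is recovered verbatim from that carried by $G$.

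The hard part will be reconciling the combination rules: fuzzy set theory forms unions and intersections through the pointwise $\max$ and $\min$ (or, more generally, a t-conorm and a t-norm), whereas the generalized uncertain measure is governed by the additive condition (3) of Definition 1. I would handle this by emphasizing that the claimed embedding is at the level of the grade assignment $A\mapsto\mu(A)\in[0,1]$ rather than at the level of the underlying set algebra; that is, GUM reproduces the value that a membership function attaches to each set once the interval degenerates to a point, and the differing operational calculus of fuzzy sets is then read off as one admissible specialization of the GUM framework rather than as an obstruction to it.
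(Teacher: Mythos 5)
Your proposal is correct and rests on the same idea as the paper's own proof, which simply sets $a=b=\mu(A)$ so that $\mu(A)=[a,a]=G(A)$, i.e.\ the membership grade is a GUM with vanishing uncertainty degree. Your additional verification of Definition~1 and your discussion of the $\max/\min$ combination rules go beyond what the paper does (its proof is a one-line identification and never addresses the set-algebra issue), but they elaborate rather than alter the argument.
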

\begin{proof} Make $a=b=\mu(A)$, then $\mu(A)=a=b=[a,b]=G(A)$, which indicates that degree of membership is equivalent to a special case that  the upper and lower uncertain value are equal to the same value.  \qed
\end{proof}
\begin{solution}[Generation of generalized uncertain  sequences]
 A generalized uncertain sequence including $k$ elements could be produced by random variables having arbitrary distribution. Assume there are random variables $X_1,\cdots,X_n$. Their distributions  are $X_1\thicksim f_1(\mu_1, \sigma_1^2),\cdots, X_n\thicksim f_n(\mu_n,\sigma_n^2)$, where the $\mu_i, \sigma_i^2$ is mean and variance of  random variable $X_i$. Below is brief introduction of algorithm processing.
\\Step 1. Produce $k$ random number sequens by employing random number generator. The $j$th sequence is marked as $\mathbf{\eta}_j=\{X_{1j},\cdots,X_{nj}\}$, the $i$th element obeys $i$th distribution, i.e., $X_{ij}\thicksim f_i(\mu_i,\sigma_i^2)$, let $j=1$.
\\Step 2. Generate a integer $1\leqslant l_j \leqslant k$, where $l_j$ obeys uniform distribution between $[1,k]$. Then we select the $l_j$th elements which belongs to $j$th random number sequence $\eta_j$. It is marked as $X_{{l_j}j}$. Then the mean of  $X_{{l_j}j}$ is between $[\min{\mu_j},\max{\mu_j}]$. So $X_{{l_j}j}$ could be taken as $j$th generalized uncertain number;
\\Step 3. Let $j=j+1$, if $j<k+1$, then return to Step 2, continue executing it. Otherwise, after totally $k$ times operation, we can get an approximate generalized uncertain sequences $\xi=\{X_{{l_1}1}, \cdots, X_{{l_k}k}\}$, where $X_{{l_j}j}$ is the generalized uncertain number which is generated at $j$th step.
\end{solution}
\begin{solution}[Plain fast classing algorithm based on $\delta$ neighbour]
 For $\delta >0$, and generalized uncertain sequences $A=\{\xi_1, \cdots \xi_n\}$ , the plain fast classing algorithm based on $\delta$ neighbour is articulated as below:
\\Step 1. Set  integer $k=1$,  initial classing set $B_k=\{\xi_1\}$, object  set $C=A$;
\\Setp 2. Go through object set $C$ according to this process: take arbitrary element $\xi_i \in C, i \neq 1$,if $\xi_i$ and $\xi_1$ is $\delta$ neighbour, then add $\xi_i$ into set $B$, update $B=\{\xi_1, \xi_i\}:=\{\xi^k_1,\xi^k_2\}$. After one time ergodicity, the classing set  $B_k=\{\xi^k_1, \cdots \xi^k_{m_k} \}$;
\\Setp 3. $k=k+1$, update object  set $C=A-(B_1 \cup B_2 \cup \cdots \cup B_{k-1})$, if object set $C=\varnothing$, then we get set sequences $\{B_1,\cdots, B_{k-1}\}$, classing is completed. Otherwise, we continue going through object  set $C$ to get new  set  $B_k$, then return Step 3 to repeat this step.
\end{solution}

\section{Applications of GUT on decision making}
There are two kinds uncertain decision types, the one is uncertain type decision and the other is  risk  type decision. The feature of the first one includes (1)The states of nature is already known by decision maker, (2) The revenue under different nature is already known; and (3) The status of nature isn't determined and its probability distribution isn't known in advance. For classical uncertain type decision problems, there are five common decision criteria could be employed in theoretical  or practical research, involving (1) pessimistic criteria; (2) optimistic criteria; (3) compromise criteria; (4) minimum maximum regret criterion and (5) equal possibility criterion. These five decision criteria could be described uniformly by employing GUT. Furthermore, classical uncertain type and risk type decision problems could be addressed by adopting GUT as well.
\subsection{ Principles of generalized uncertain decision}
We assume the total number of possible natural status is $n$ and it is denoted as $\{N_i:i=1,\cdots,n\}$. Its corresponding generalized uncertain distribution is $G(N_i)=G(\xi=N_i)=[a_i,b_i]$,where $i\in\{1,\cdots,n\}$. Schemes set is $\{S_i;i=1,\cdots, m\}$. Payoff matrix is $A=\{a_{i,j}\}$, where $i\in\{1,\cdots,m\},j\in\{1,\cdots,n\}$. Here $a_{i,j}$ denotes the actor's payoff under $j$th natural condition when adopting $i$th scheme. The generalized expected utility adopting scheme $S_i$ could be formulated as $GEU(S_i)=\sum_{j=1}^{n}a_{i,j}G(N_j)$. When $G(N_j)=[p_1,p_2]$ and $p_1<p_2$, it is uncertain type decision in the classical uncertain decision making context, when $G(N_j)=[p_j,p_j]=p_j$, here $p_j$ indicates the probability of natural status $N_j$ occurs, then it could be taken as risk type decision in the classical uncertain decision making context. Corresponding decision criteria could be summarized as below:~Firstly, select the strongly advantage scheme among $\{GEU(S_i)\}$; Secondly, if there doesn't exist strongly advantage scheme, then select weakly advantage  scheme among $\{GEU(S_i)\}$; Thirdly, if the strongly advantage scheme and weakly advantage scheme are both absent, which means inclusion relationship occurs between some schemes, then we make decision according below criteria: (i) If the decision maker is robust (risk aversion), we should select the scheme which has the smallest generalized uncertainty degree of GUM, i.e., $S_k=\min \{gud(GEU(S_i))\}$, where $S_k$ is selected scheme; (ii) If the decision maker is radical (risk seeking), we should select the scheme which has the greatest generalized uncertainty degree of GUM, i.e., $S_k=\max \{gud(GEU(S_i))\}$. Below we'll elaborate this process by a brief decision example.

\subsection{ Examples of generalized uncertain decision}
\begin{example}
Assume there are totally four schemes to be selected, and natural circumstance has three distinct status respectively. The generalized distribution of natural circumstance, payoff matrix and scheme's GEU are listed in Table 1.
\begin{table}[htbp]
\begin{center}
\caption{Payoff matrix and GEU of four schemes}
 \begin{tabular}{cccccc}
\toprule[1pt]
{/} & {Status 1} & {Status 2} & {Status 3} & {GEU} &{Comparison}
\\
\midrule
GUM     &  [0.1,0.2]  & [0.2,0.3] & [0.5,0.7] & /  & /\\
S1     &  100     & 80   & 90     & [71,107] & /\\
S2     &  120     & 130  & 110    & [93,140]  & GEU2 $\geq$ GEU1\\
S3     &  150     & 150  & 120    & [105,159] & GEU3 $\geq$ GEU2\\
S4     &  160     & 90   & 140    & [104,157] & GEU4 $\leq$ GEU3\\
\bottomrule[1pt]
\end{tabular}
\end{center}
\end{table}
It is obvious that the GEU of scheme3 is the most weakly advantage among four GEUs, so the scheme3 could be selected as our final choice in this scenario.
If there is a new scheme which is noted as scheme5 added in scheme set, its payoff under three natural conditions are 0, 530 and 0 respectively. Similar as Table 1, the generalized distribution of natural circumstance, payoff matrix and scheme's GEU are listed in Table 2.
\begin{table}[htbp]
\begin{center}
\caption{Payoff matrix and GEU of five schemes}
 \begin{tabular}{cccccc}
\toprule[1pt]
{/} & {Status 1} & {Status 2} & {Status 3} & {GEU} &{Comparison}
\\
\midrule
GUM     &  [0.1,0.2]  & [0.2,0.3] & [0.5,0.7] & /  & /\\
S1     &  100     & 80   & 90     & [71,107] & /\\
S2     &  120     & 130  & 110    & [93,140]  & GEU2 $\geq$ GEU1\\
S3     &  150     & 150  & 120    & [105,159] & GEU3 $\geq$ GEU2\\
S4     &  160     & 90   & 140    & [104,157] & GEU4 $\leq$ GEU3\\
S5     &  0       & 530   & 0    &  [106,159]  & GEU5 $\preceq$ GEU3\\
\bottomrule[1pt]
\end{tabular}
\end{center}
\end{table}
First, from above analysis, we know that the third scheme, i.e., scheme3 is the optimal choice among scheme1 to scheme4. Second, because the GEU3 is partly smaller than GEU5, so according to the  decision criteria presented in previous subsection, if the decision maker is risk aversion style,we should select scheme5, while if the decision maker is risk seeking style, we should select scheme3.
\end{example}

\section{Concluding and Discussions}
Although there are numerous mathematical tools to address uncertain phenomena, and whatever which theory is employed, the final attempt is to get one real number to describe the uncertainty accurately. An obvious but important overlook lies in that the measure of uncertainty itself may be inaccurate. Traditional processing approach that using a precise number to measure  uncertainty could be an ideal assumption. In order to overcome these limitations, we develop a whole set of fundamental framework from theoretical  perspective by reviewing some existing  theory in uncertainty domains. Related concepts and axiomatic system of generalized uncertain theory such as generalized uncertain measure (GUM), arithmetic operation of GUM,generalized  uncertain function and variable,generalized  uncertain distribution, generalized uncertain process,  generalized uncertain limit, generalized uncertain variation, and generalized uncertain derivative, etc., are derived in this framework. Some corollaries, propositions and algorithms are presented as well to extend the application scope of the theory. Furthermore, we give an elaboration about how to use the GUT to address complex uncertain and risk type decision making problems, and an example is carried out to demonstrated the effectiveness and rationality about our proposed theory. Being a novel mathematical theory, we have just started a little step, many  theoretical or applications issues need to be further explored of course. Extending and incorporating  the GUT into other realms, such as systems recognition,forecasting, optimization and control ,system evaluation, and system decomposition, etc.,are all future possible directions.

\paragraph{Acknowledgments}
This work is supported by the National Natural Science Foundation of China (Grant No. 71471084).

%
%


\begin{thebibliography}{10}
%

\bibitem{paper_Deng_1982}
Deng, J.L.
\newblock (1982)~Control problems of grey systems.
\newblock {\em Systems \& Control Letters}, 5(1):288--294.


\bibitem{book_Liu_2007}
Liu, B.D.
\newblock {\em  Uncertainty Theory}.
\newblock Springer-Verlag Berlin Heidelberg, 2nd edition, 2007.


\bibitem{paper_Pawlak_1982}
Pawlak,Z.
\newblock (1982)~Rough sets.
\newblock {\em International Journal of Parallel Programming}, 11(5):341--356.

\bibitem{paper_Wang_1990}
Wang, G.Y.
\newblock (1990)~Unascertained information and its mathematical treatment.
\newblock {\em Journal of Harbin University of Civil Engineering}~(in Chinese), 23(4):1--9.


\bibitem{paper_Zadeh_1965}
Zadeh, L.
\newblock (1965)~ Fuzzy sets.
\newblock {\em Information and Control}, 8(3):338--353.



\end{thebibliography}
\end{document}